\newtheorem{theorem}{Theorem}[section]
\newtheorem{corollary}{Corollary}
\theoremstyle{definition}
\newtheorem{definition}[theorem]{Definition}
\newtheorem{remark}{Remark}
\title[Max Regularity and Analytic Semigroups]
      {Continuous Maximal Regularity and Analytic Semigroups}
\author[Jeremy LeCrone and Gieri Simonett]{}
\subjclass{Primary: 35K90, 47D06; Secondary: 35K35.}
 \keywords{Maximal regularity, analytic semigroups, parabolic problems.}
 \email{jeremy.lecrone@vanderbilt.edu}
 \email{gieri.simonett@vanderbilt.edu}
\begin{document}
\maketitle

\centerline{\scshape Jeremy LeCrone }
\medskip
{\footnotesize
 \centerline{Department of Mathematics}
   \centerline{Vanderbilt University}
   \centerline{Nashville, TN 37240, USA}
} 

\medskip

\centerline{\scshape Gieri Simonett}
\medskip
{\footnotesize
 \centerline{Department of Mathematics}
   \centerline{Vanderbilt University}
   \centerline{Nashville, TN 37240, USA}
}



\begin{abstract}
In this paper we establish a result regarding the connection between continuous maximal regularity
and generation of analytic semigroups on a pair of densely embedded Banach spaces. More precisely,
we show that continuous maximal regularity for a closed operator $A: E_1 \rightarrow E_0$
implies that $A$ generates a strongly continuous analytic semigroup on $E_0$ with domain equal $E_1$.
\end{abstract}


\section{Introduction}

In this paper we consider the following abstract inhomogeneous equation
\begin{equation}\label{Problem}
\begin{cases}
\frac{d}{dt}u(t) - Au(t) = f(t), \quad\text{$t \in J$},\\
u(0)=u_0 .
\end{cases}
\end{equation}
Here we are assuming that $J:=[0,T]$ for a fixed $T>0$, or $J:=\mathbb{R}_+=[0,\infty)$, and that $A: D(A) \subset E_0 \rightarrow E_0$ is a closed, densely defined operator on the complex Banach space $(E_0, \| \cdot \|_0)$. It then follows that
$(E_1, \| \cdot \|_1) := (D(A), \| \cdot \|_{D(A)})$, equipped with the graph norm $\| \cdot \|_{D(A)}$, is also a complex Banach space and $E_1$ is densely (continuously) embedded in $E_0$. We assume that $f : J \rightarrow E_0$ and $u_0\in E_0$ are known, so that the unknown quantity in \eqref{Problem} is the function $u : J \rightarrow E_0$. We say that $u$ is a \emph{classical solution} to \eqref{Problem} on $J$ if $u \in C(J,E_0) \cap C^1( \dot{J}, E_0) \cap C( \dot{J}, E_1) $ and $u$ satisfies \eqref{Problem} for $t \in \dot{J} := J \setminus \{ 0 \}$.

In approaching this problem, it would be beneficial to know from the outset that the operator $A$ generates a strongly continuous semigroup $\{ e^{tA}: t \geq 0 \}$ on $E_0$. Then one can derive an explicit representation for solutions to \eqref{Problem}. More precisely, if a solution $u$ of \eqref{Problem} exists then it is represented by the variation of parameters formula as
\begin{equation}
\label{Eqn:VarOfParams}
u(t)= e^{tA}u_0 + \int_0^t {e^{(t - \tau)A} f(\tau) \, d \tau} \qquad \text{for} \quad t \in J,
\end{equation}
under the assumption that $A$ generates a C$_0$-semigroup.
However, as we will show in our main result, one can do away with this assumption if the operator is known to satisfy the conditions of continuous maximal regularity.

In case $A$ generates an analytic semigroup, sufficient conditions for
continuous maximal regularity were first obtained by Da Prato and Grisvard \cite{DPG79}.
The results contained in this paper show that the assumption of $A$ generating an
analytic semigroup is also necessary for the Da Prato-Grisvard result.

The theory of maximal regularity has proved itself to be an indispensable tool
in the analysis of nonlinear parabolic problems, and it has been used by many authors.
In the case of continuous maximal regularity we just mention the references
\cite{DaLu88, ES97, ES98, EMS98, Si95}.


\section{Maximal Regularity}\label{sec:MaximalRegularity}
In the remainder of this paper we will use the following notation:
if $E_0$ and $E_1$ are Banach spaces we say that
$(E_0,E_1)$ is a pair of embedded Banach spaces if $E_1\subset E_0$
and  the natural injection $i:E_1\to E_0$ is continuous.
If, in addition, $E_1$ is a dense subset of $E_0$ then
we call $(E_0,E_1)$ a pair of densely embedded spaces.
Suppose $(E_0,E_1)$ is a pair of embedded Banach spaces
and $A:E_1\to E_0$ is a linear operator. Then we can also interpret
$A:E_1\subset E_0\to E_0$ as an unbounded linear operator on $E_0$ with domain $D(A)=E_1$.
It is then meaningful to say that $A$ is a closed operator on $E_0$.
For a given pair $(E_0,E_1)$ of embedded Banach spaces
we shall often consider linear operators
$A:E_1\to E_0$ with the property that $A\in\mathcal{B}(E_1,E_0)$
(i.e. $A:E_1\to E_0$ is a bounded operator)
and the additional
property that $A$ is closed on $E_0$.
These combined properties turn out to be equivalent to the
fact that $E_1$ and $(D(A),\|\cdot\|_{D(A)})$ carry equivalent norms,
see \cite[Lemma I.1.1.2]{Amann95}.

Next we give a general definition of maximal regularity, and then move on
to focus on the more specific continuous maximal regularity.

\begin{definition}
Let $J := [0,T]$ for $T > 0$, or $J := \mathbb{R}_+ = [0,\infty)$, and assume that
$(E_0,E_1)$ is a pair of densely embedded Banach spaces.
Moreover, suppose that
$(\mathbb{E}_0(J),\mathbb{E}_1(J))$
is a pair of Banach spaces such that
\begin{equation*}\label{eqn:BlockESpaces}
\mathbb{E}_0(J) \subset L_{1,\text{loc}}(J,E_0)  \qquad
\mathbb{E}_1(J) \subset W^{1}_{1,\text{loc}}(J,E_0) \cap L_{1,\text{loc}}(J,E_1).
\end{equation*}
Define the \emph{trace operator} $\gamma : \mathbb{E}_1(J) \rightarrow E_0$ by $\gamma u := u(0)$ for $u \in \mathbb{E}_1(J)$
and the \emph{trace space} $\gamma \mathbb{E}_1$ by
\begin{equation*}
\begin{split}
\gamma \mathbb{E}_1 &:= \{ x \in E_0 : x = \gamma u \text{ for some } u \in \mathbb{E}_1(J) \},\\
\| x \|_{\gamma} &:= \| x \|_{\gamma \mathbb{E}_1} := \inf \{ \| u \|_{\mathbb{E}_1(J)} : u \in \mathbb{E}_1(J) \text{ and } \gamma u = x \}.
\end{split}
\end{equation*}
For $A\in\mathcal{B}(E_1,E_0)$
we say that $(\mathbb{E}_0(J),\mathbb{E}_1(J))$ is a \emph{pair of maximal regularity for $A$} if
\begin{equation*}\label{eqn:MaximalRegularity}
\left( \frac{d}{dt} - A, \gamma \right) \in \text{Isom} \big( \mathbb{E}_1(J), \mathbb{E}_0(J) \times \gamma \mathbb{E}_1 \big).
\end{equation*}
That is, $(\mathbb{E}_0(J),\mathbb{E}_1(J))$ enjoys the property of maximal regularity if, for every $(f,x) \in \mathbb{E}_0(J) \times \gamma \mathbb{E}_1$, there exists a unique function $u \in \mathbb{E}_1(J)$ such that $u$ satisfies
\begin{equation}
\label{eqn:MaxRegEqn}
\begin{cases}
\frac{d}{dt}u(t) - A u(t) = f(t) & \text{for $t \in J$}\\
\gamma u = u(0) = x
\end{cases}
\end{equation}
and the mapping $[(f,x) \mapsto u]$ is continuous.
For a pair $(\mathbb{E}_0(J),\mathbb{E}_1(J))$ of maximal regularity for $A$, we define the \emph{solution operator}
\begin{equation*}
K_A: \mathbb{E}_0(J) \rightarrow \mathbb{E}_1(J) \quad \text{by}
\quad K_Af := \left( \frac{d}{dt} - A, \gamma \right)^{-1} (f,0).
\end{equation*}
\end{definition}
We say that the pair enjoys the property of \emph{continuous} maximal regularity if
\begin{equation*}
(\mathbb{E}_0(J), \, \mathbb{E}_1(J)) = (BU\!C(J,E_0), BU\!C^1(J,E_0) \cap BU\!C(J,E_1)).
\end{equation*}
Here  $BU\!C(J,E)$ denotes the space of all bounded uniformly continuous functions
$u:J\to E$, and $BU\!C^1(J, E)$ stands for all functions in $BU\!C(J,E)$ whose derivative also
shares this property. In case $J=[0,T]$ is a compact interval we will use the shorter notation
$C$ and $C^1$ in place of $BU\!C$ and $BU\!C^1$, respectively.
We refer to \cite{Amann95} for more background information on maximal regularity.

Notice that $(\mathbb{E}_0(J), \mathbb{E}_1(J))$ a pair of continuous maximal regularity for $A$ implies existence of a unique classical solution $u$ to \eqref{Problem} for every $f \in \mathbb{E}_0(J)$ and $x \in E_1$, since $E_1$ and $\gamma \mathbb{E}_1$ coincide in this case, and there exists a positive constant M, independent of $f$ and $x$, such that the estimate
\begin{equation}
\label{MaxReg}
\sup_{t \in J} \big( \| \dot{u}(t) \|_{0} + \| u(t) \|_{1} \big) \leq M \left( \sup_{t \in J} \| f(t) \|_{0} + \| x \|_{1} \right)
\end{equation}
holds. Furthermore, it follows from Theorem~\ref{MainResult} below and \eqref{Eqn:VarOfParams} that the solution operator produces the function
\begin{equation*}
(K_Af)(t) = \int_0^t {e^{(t - \tau)A} f(\tau) \, d \tau} \qquad \text{for} \quad t \in J.
\end{equation*}
\noindent
The following theorem is a modification of a result by Dore~\cite{Dore93} and  Pr\"uss~\cite{Pruss02}, who demonstrated a proof in the case of $L_p$ maximal regularity.
%


\bigskip
\begin{theorem}
\label{MainResult}
Fix $T > 0$ (or $T = \infty$) and $J := [0,T]$ (or $J := \mathbb{R}_+)$ and let $(E_0, E_1)$
be a pair of densely embedded Banach spaces. If $A \in \mathcal{B}(E_1, E_0)$ is
a closed operator on $E_0$ and
\[
\big( \mathbb{E}_0(J),\mathbb{E}_1(J) \big) = \left( BU \! C(J, E_0), BU \! C^1(J, E_0) \cap BU \! C(J, E_1) \right)
\]
is a pair of continuous maximal regularity for $A$, then $A$ generates a strongly continuous
analytic semigroup on $E_0$. Moreover, in the case $J = \mathbb{R}_+$, it follows that $s(A) < 0$ where
$s(A) := \sup \{ {\rm Re}\, \mu: \mu \in \sigma (A) \}$ is the spectral bound of $A$.
\end{theorem}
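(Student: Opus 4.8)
Here is how I would approach the proof.

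\medskip

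\noindent\emph{Overall plan.} Write $M$ for the constant in \eqref{MaxReg} and $c:=\|i\|_{\mathcal B(E_1,E_0)}$. The goal is to exhibit a sector $\{\lambda\in\mathbb C:|\arg(\lambda-\omega_0)|<\tfrac\pi2+\theta\}$, for some $\omega_0\in\mathbb R$ and $\theta>0$, contained in $\rho(A)$ and carrying the bound $\|(\lambda-A)^{-1}\|_{\mathcal B(E_0)}\le C/|\lambda-\omega_0|$; since $D(A)=E_1$ is dense in $E_0$, this is exactly the assertion that $A$ generates a strongly continuous analytic semigroup. I would carry out the argument in detail for $J=\mathbb R_+$; for $J=[0,T]$ the only change is that the Laplace integrals below run over $[0,T]$, producing an extra boundary term at $t=T$ which is exponentially small in $\mathrm{Re}\,\lambda$ and is absorbed by a Neumann series, merely pushing $\omega_0$ to the right. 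The refinement $s(A)<0$ is asserted, and obtained, only for $J=\mathbb R_+$.

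\medskip

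\noindent\emph{The resolvent estimate.} Fix $\sigma$ with $a:=\mathrm{Re}\,\sigma>0$, put $b:=\mathrm{Im}\,\sigma$, and note $\sigma-A=a-(A-ib)$. Conjugation by the scalar $e^{ibt}$ shows that $A-ib$ again has continuous maximal regularity on $\mathbb R_+$ and that $K_{A-ib}(e^{-a\,\cdot}y)=e^{-ib\,\cdot}u$, where $u:=K_A(e^{-(a-ib)\,\cdot}y)$. By \eqref{MaxReg}, $\sup_{t\ge0}\|u(t)\|_1\le M\|e^{-(a-ib)\,\cdot}y\|_{BU\!C(\mathbb R_+,E_0)}=M\|y\|_0$; hence $w:=e^{-ib\,\cdot}u$ solves $w'-(A-ib)w=e^{-at}y$, $w(0)=0$, with $\sup_t\|w(t)\|_1=\sup_t\|u(t)\|_1\le M\|y\|_0$ — and this bound is independent of $b$. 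Since $a>0$, the Bochner integral $\widehat w:=\int_0^\infty e^{-at}w(t)\,dt$ converges in $E_1$ with $\|\widehat w\|_1\le M\|y\|_0/a$; integrating the equation against $e^{-at}$ (the boundary terms vanish) and using closedness of $A$ gives $(a-(A-ib))\widehat w=\tfrac1{2a}y$, i.e. $(\sigma-A)(2a\widehat w)=y$, so $\sigma-A$ is surjective. For injectivity: if $Ax=\sigma x$ with $x\ne0$, then $t\mapsto e^{\sigma t}x$ and the maximal-regularity solution of $u'-Au=0$, $u(0)=x$ (which by \eqref{MaxReg} stays bounded by $M\|x\|_1$ in $E_1$) are both classical solutions of that problem, hence equal by the uniqueness of classical solutions; but the former is unbounded in $E_1$, a contradiction. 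Therefore $\{\mathrm{Re}\,\sigma>0\}\subset\rho(A)$ and $(\sigma-A)^{-1}y=2a\widehat w$, which yields the crucial \emph{uniform} bound
\[
\|(\sigma-A)^{-1}\|_{\mathcal B(E_0,E_1)}\le 2M\qquad\text{for every }\mathrm{Re}\,\sigma>0 .
\]
Since $A\in\mathcal B(E_1,E_0)$, this gives $\|\sigma(\sigma-A)^{-1}\|_{\mathcal B(E_0)}=\|I+A(\sigma-A)^{-1}\|_{\mathcal B(E_0)}\le 1+2M\|A\|_{\mathcal B(E_1,E_0)}=:C'$, so $\|(\sigma-A)^{-1}\|_{\mathcal B(E_0)}\le C'/|\sigma|$ on the right half-plane; expanding the resolvent in a Neumann series around points of that half-plane then places an open sector $\{|\arg\lambda|<\tfrac\pi2+\theta\}$ (some $\theta>0$) inside $\rho(A)$ with a bound of the same type, and hence $A$ generates a (bounded) strongly continuous analytic semigroup on $E_0$.

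\medskip

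\noindent\emph{The spectral bound ($J=\mathbb R_+$).} By the above, the open right half-plane and the punctured imaginary axis lie in $\rho(A)$, so it suffices to show $0\in\rho(A)$. First, $0$ is not an eigenvalue: if $Ax_0=0$ with $x_0\ne0$, the solution of $u'-Au=x_0$, $u(0)=0$ equals $t\mapsto tx_0$, which is unbounded in $E_1$, contradicting $K_A(x_0)\in BU\!C(\mathbb R_+,E_1)$. Second, $0\in\rho(A)$: for real $\sigma,\tau>0$ the resolvent identity and the uniform bound give $\|(\sigma-A)^{-1}y-(\tau-A)^{-1}y\|_1=|\tau-\sigma|\,\|(\sigma-A)^{-1}(\tau-A)^{-1}y\|_1\le|\tau-\sigma|\,c(2M)^2\|y\|_0$, so $x_y:=\lim_{\sigma\downarrow0}(\sigma-A)^{-1}y$ exists in $E_1$; since $\|\sigma(\sigma-A)^{-1}y\|_0\le 2cM\sigma\|y\|_0\to0$ and $A(\sigma-A)^{-1}y=\sigma(\sigma-A)^{-1}y-y$, closedness of $A$ forces $Ax_y=-y$, so $A\colon E_1\to E_0$ is onto and (being injective) bijective, i.e. $0\in\rho(A)$. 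Consequently $\sigma(A)\subset\{\mathrm{Re}\,\lambda<0\}$, and since $\sigma(A)$ sits in a proper subsector of the left half-plane (the complement of the sector above) while $\rho(A)$ contains a neighbourhood of $0$, the spectrum is bounded away from the imaginary axis: $s(A)<0$.

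\medskip

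\noindent\emph{Where the difficulty lies.} The main obstacle is obtaining the \emph{uniform} half-plane bound $\|(\sigma-A)^{-1}\|_{\mathcal B(E_0,E_1)}\le 2M$. Testing continuous maximal regularity directly at a genuinely complex $\sigma$ only yields $\|(\sigma-A)^{-1}\|\lesssim|\sigma|/\mathrm{Re}\,\sigma$, which degenerates toward the imaginary axis and therefore places merely a half-plane — not a sector of opening larger than $\pi$ — inside $\rho(A)$, and that is not enough to produce an analytic semigroup. The device that overcomes this is to run the Laplace-transform computation for the imaginary translate $A-i\,\mathrm{Im}\,\sigma$ at the \emph{real} point $\mathrm{Re}\,\sigma$, where the relevant maximal-regularity supremum bound is translation-independent because multiplication by $e^{-ibt}$ is an isometry of $BU\!C(\mathbb R_+,E_1)$. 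The remaining technical points — the boundary term in the bounded-interval case and the verification that $0\in\rho(A)$ on the half-line — are comparatively routine once this is in hand.
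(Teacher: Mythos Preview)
Your surjectivity computation is, after unwinding, the paper's: under your substitution $w=e^{-ib\,\cdot}u$ one has $e^{-\mu t}u_{\bar\mu}(t)=e^{-at}w(t)$, so your $2a\widehat w$ is exactly the paper's $U_\mu x$ (with $T=\infty$), and the ``conjugation by $e^{ibt}$'' is just a clean way of saying that pairing $e^{-\mu t}$ with $K_A(e^{-\bar\mu\,\cdot}x)$ makes the imaginary parts cancel. Your $s(A)<0$ argument (limit of resolvents as $\sigma\downarrow 0$ together with the sector) is a correct variant of the paper's, which simply notes that the uniform bound on $\mathbb C_+$ forces $i\mathbb R\subset\rho(A)$.

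The genuine gap is the injectivity step on $J=\mathbb R_+$. You assert that $t\mapsto e^{\sigma t}x$ and the maximal-regularity solution of $u'-Au=0$, $u(0)=x$ agree ``by the uniqueness of classical solutions''. No such uniqueness is at your disposal: the maximal-regularity hypothesis gives uniqueness only \emph{within} $\mathbb E_1(\mathbb R_+)=BU\!C^1(\mathbb R_+,E_0)\cap BU\!C(\mathbb R_+,E_1)$, and $e^{\sigma t}x$ does not belong there when $\mathrm{Re}\,\sigma>0$. Uniqueness of arbitrary classical solutions would follow \emph{once} $A$ generates a $C_0$-semigroup, so invoking it here is circular. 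The paper avoids this by reversing your order of cases. It first treats $J=[0,T]$, where the test function $v_\mu(t)=e^{\mu t}x$ \emph{does} lie in $\mathbb E_1([0,T])$; plugging it into \eqref{MaxReg} and absorbing the initial-data term (possible because $\sup_{t\in[0,T]}e^{\mathrm{Re}\,\mu\,t}$ can be made $\ge 2M$) yields the a~priori estimate $\|x\|_1+|\mu|\,\|x\|_0\le 2M\|(\mu-A)x\|_0$ for $\mathrm{Re}\,\mu>\omega_1$, hence injectivity. For $J=\mathbb R_+$ the paper then proves, by a gluing argument you omit, that maximal regularity on $\mathbb R_+$ restricts to maximal regularity on each $[0,T]$ with a $T$-independent constant; letting $T\to\infty$ gives $\omega_1=0$. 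Either that restriction step, or the direct test-function estimate on a finite interval, is what your injectivity argument is missing.
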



\begin{proof}
We begin by demonstrating the result for $T < \infty$ and then we extend to the unbounded interval $\mathbb{R}_+$.
In particular, we show that there exist constants $\omega \in \mathbb{R}$ and $N \geq 1$ so that
\begin{equation*}
\{ \mu \in \mathbb{C}: \text{Re} \, \mu >  \omega \} \subset \rho(A) \quad \text{and}
\quad \|(\mu - A)^{-1} \|_{\mathcal{B}(E_0)} \leq \frac{N}{1 + | \mu |} \quad \text{for} \quad \text{Re} \, \mu > \omega \, .
\end{equation*}
\\
\emph{\bfseries Claim 1:} There exists $\omega_1 \geq 0$ sufficiently large so that the  a priori estimate
\begin{equation}
\label{a-priori}
\|x\|_1+|\mu| \|x\|_0\le C\|(\mu-A)x\|_0,\quad x\in E_1,\quad \text{Re} \, \mu > \omega_1,
\end{equation}
holds. In particular, $(\mu - A) : E_1 \rightarrow E_0$ is injective for $\text{Re} \, \mu > \omega_1 \, .$
\bigskip
\begin{itemize}
\item[]
Let $x \in E_1, \, \mu \in \mathbb{C}$ and set $v_{\mu}(t) := v_{\mu}(t,x) = e^{\mu t} x$.
Then $v_{\mu} \in {\mathbb E}_1(J)$ and satisfies
\begin{equation*}
\dot{v}_{\mu}(t) - A v_{\mu}(t) = g_{\mu}(t), \quad v_{\mu}(0) = x,
\end{equation*}
where $g_{\mu}(t) = e^{\mu t}(\mu - A) x \in \mathbb{E}_0(J)$.
By assumption, we see that inequality (\ref{MaxReg}) holds and implies
\begin{equation}
\label{I1}
\sup_{t \in J} e^{\text{Re}\,\mu t} \big(\|x\|_1 + |\mu|\|x\|_0 \big)
\leq M \left( \sup_{t \in J} e^{\text{Re}\,\mu t} \| (\mu - A)x \|_0 + \| x \|_1 \right).
\end{equation}
Now, choosing $\omega_1 \geq 0$ large enough so that $\displaystyle 2M \leq \sup_{t \in J} e^{\omega_1 t}$ we obtain
\begin{equation}
\label{I2}
\| x \|_1 + | \mu | \| x \|_0 \leq 2M \| (\mu - A)x \|_0 , \quad \text{Re} \, \mu > \omega_1,
\end{equation}
thus establishing the claim.
\end{itemize}
\bigskip
\noindent
Now let $x \in E_0$, $\mu \in \mathbb{C}_+ := \{ z \in \mathbb{C}: \text{Re} \, \mu > 0 \}$ and define the functions
\begin{equation*}
f_{\mu}(t) := e^{-\mu t}x \in BU \!C(\mathbb{R}_+,E_0), \qquad u_{\mu} := u_{\mu}(\cdot,x) := K_A f_{\mu} \in \mathbb{E}_1(J) \, .
\end{equation*}
Notice that continuity of the embedding $E_1 \hookrightarrow E_0$ and continuity of $K_A$ imply existence of positive constants $c_1$ and $c_2$ so that
	\begin{equation*}
	\label{embedding}
	\sup_{t \in J} \| u_{\mu}(t) \|_0 \leq c_1 \sup_{t \in J} \| u_{\mu} (t) \|_1 \leq c_1 \| u_{\mu} \|_{\mathbb{E}_1} \leq c_2 \| f_{\mu} \|_{\mathbb{E}_0} = c_2 \sup_{t \in J} \| f_{\mu}(t) \|_0 \leq c_2 \| x \|_0 \, .
	\end{equation*}
In particular, we see that
\begin{equation}
\label{uMuBound}
\| u_{\mu} (t) \|_0 \leq c_2 \| x \|_0 \qquad \mu \in \mathbb{C}_+, \; t \in J.
\end{equation}
Also, for $\mu \in \mathbb{C}_+$ we note that $\bar{\mu} \in \mathbb{C}_+$ and define the functions
$U_{\mu}: E_0 \rightarrow E_1$ and $V_{\mu}: E_0 \rightarrow E_0$ as
\begin{equation}
\label{eqn:BigUmu}
\begin{aligned}
  U_{\mu}x :&= 2 {\rm Re}\, \mu \int_0^T e^{- \mu t} u_{\bar{\mu}}(t,x) \, dt,\\
   V_{\mu}x :&= \frac{2  {\rm Re} \, \mu \, e^{- \mu T}}{ \left(1 - e^{- 2 {\rm Re} \, \mu T} \right)} \, u_{\bar{\mu}} (T,x) \, .
\end{aligned}
\end{equation}\\
\emph{\bfseries Claim 2:}
There exists $\omega_2 \geq 0$ sufficiently large so that
$(\mu - A) : E_1 \rightarrow E_0$ is surjective for $\text{Re} \, \mu > \omega_2 \, .$
\begin{itemize}

	\item[] 	
	First notice that
	\begin{equation*}
	\| V_{\mu} x \|_{0} = \frac{2  {\rm Re} \, \mu \, e^{- {\rm Re} \, \mu T}}
	{ \left(1 - e^{- 2 {\rm Re} \, \mu T} \right)} \, \| u_{\bar{\mu}} (T,x) \|_0
	\leq c_2\frac{2   {\rm Re} \, \mu \, e^{- {\rm Re} \, \mu T}}{ \left(1 - e^{- 2 {\rm Re} \, \mu T} \right)} \, \| x \|_0 \, ,
	\end{equation*}
	converges to 0 as Re$\, \mu \rightarrow \infty$.
	So we fix $\omega_2 \geq 0$ large enough that $\| V_{\mu} \|_{\mathcal{B}(E_0)}
	< \frac{1}{2}$ for $\text{Re} \, \mu > \omega_2 \, .$
  Multiplying the relation
  \[
   \frac{d}{dt} u_{\bar{\mu}}(t,x) - A u_{\bar{\mu}}(t,x) = e^{- \bar{\mu} t} x
  \]
	by $2 ({\rm Re} \, \mu) e^{- \mu t}$ and integrating over the interval $[0,T]$ yields
	\begin{equation*}
	(\mu - A)U_{\mu} x = \big(1 - e^{- 2 {\rm Re} \, \mu T} \big)
	\, \left( I - V_{\mu} \right) \, x, \qquad x \in E_0, \, \mu \in \mathbb{C}_+ \, .
	\end{equation*}
	Here we use the fact that $A$ is a closed operator and so we are free to interchange $A$ with integration.
	Therefore, choosing $\text{Re} \, \mu > \omega_2$ we conclude that
	$(\mu - A)U_{\mu}$ is invertible by the Neumann series.
	 However, invertibility of $(\mu - A)U_{\mu}$ implies surjectivity of $(\mu - A)$ and so the claim is established.
	\end{itemize}
\bigskip
\noindent
With claims 1 and 2 established, we see that choosing $\omega \geq \omega_1 \vee \omega_2$ implies $\{ \mu \in \mathbb{C}: {\rm Re} \mu > \omega \} \subset \rho(A)$ and the inequality
\[
\| ( \mu - A )^{-1} \|_{\mathcal{B}(E_0)} \leq \frac{2M(1 \vee c_1)}{1 + | \mu |}, \qquad {\rm Re}\, \mu > \omega \, ,
\]
follows from (\ref{I2}). It is well known that these properties are sufficient for $A$ to generate an analytic semigroup on $E_0$, c.f. \cite[Theorem 1.2.2]{Amann95}, which concludes the proof for the finite interval $J = [0,T]$.
\medskip\\
Now we consider the case of continuous maximal regularity on $\mathbb{R}_+$. More precisely,
assume that $\big( \mathbb{E}_0 (\mathbb{R}_+), \mathbb{E}_1 (\mathbb{R}_+) \big) =
\big( BU \!C( \mathbb{R}_+, E_0), BU \!C^1( \mathbb{R}_+, E_0) \cap BU \!C(\mathbb{R}_+, E_1) \big)$
is a pair of maximal regularity for $A$.
\medskip\\
\noindent
\emph{\bfseries Claim 3:}
For any $T > 0, \, \big( \mathbb{E}_0 (J), \mathbb{E}_1 (J) \big)$ is a pair of
maximal regularity for $A$, where $J = [0,T]$.
\begin{itemize}
\item[]
	Note that any function $f \in C(J, E_0)$ can be
	extended to $Ef \in BU \!C( \mathbb{R}_+, E_0)$ by setting $(Ef)(t) = f(T)$ for $t \geq T$.
	Moreover, denoting by $R$ the restriction operator to the interval $J$,
	we see that
	$u := R \big( \frac{d}{dt} - A, \gamma \big)^{-1} (Ef, x) \in \mathbb{E}_1(J) \,$
	is a solution to problem (\ref{eqn:MaxRegEqn}) on $J$,
	for $f \in C(J,E_0)$ and $x \in E_1$.
	It remains to show that this solution $u$ is unique in $\mathbb{E}_1(J)$.
	In fact, it suffices to show that $u \equiv 0$ is the unique solution
	in $\mathbb{E}_1(J)$ to problem (\ref{eqn:MaxRegEqn}) with $(f,x) = (0,0)$.
	By way of contradiction, suppose $\tilde{u} \in \mathbb{E}_1(J)$
	solves (\ref{eqn:MaxRegEqn}) with $(f,x) = (0,0)$ and there exists $t_1 \in (0,T)$ such
	that $\tilde{u}(t_1) \not= 0$. Then, define $v := \big( \frac{d}{dt} - A, \gamma \big)^{-1} (0, \tilde{u}(t_1))$,
	so that $v \in \mathbb{E}_1(\mathbb{R}_+)$ and $v$ satisfies $\dot{v} - Av = 0$ on $\mathbb{R}_+$
	and  $v(0) = \tilde{u}(t_1)$. Define
	\[
	w(t) :=
	\begin{cases}
		\tilde u(t), & \text{$t \in [0, t_1],$}\\
		v(t - t_1), & \text{$ t \in [t_1, \infty),$}
	\end{cases}
	\]
	and it follows that $w \in \mathbb{E}_1 (\mathbb{R}_+)$ and $w$ satisfies
	(\ref{eqn:MaxRegEqn}) with $(f,x)=(0,0)$, on $\mathbb{R}_+$.
	However, note that $w(t_1) = \tilde u(t_1) \not= 0$, so that
	$w \not\equiv 0$, a contradiction.
\end{itemize}
\bigskip
\noindent
Thus, by Claims 1 and 2 above, we see that
$A$ generates an analytic semigroup on $E_0$.
It remains to consider the spectral bound $s(A)$.

\medskip
\noindent
\emph{\bfseries Claim 4:}
The spectral bound of $A$ is strictly negative, i.e. $s(A) < 0$.
\begin{itemize}
\item[]
	By assumption of continuous maximal regularity on $\mathbb{R}_+$, and by Claim 3,
	there exists a positive constant $M$ (independent of $T > 0$) such that
	\begin{equation*}
	\label{T-indepenent}
	\| u \|_{{\mathbb E}_1(J)} \leq M \big( \| f \|_{{\mathbb E}_0(J)} + \| x \|_1 \big), \quad
	(f, x) \in {\mathbb E}_0(J) \times E_1,
	\end{equation*}
	for the solution $u = R \big( \frac{d}{dt} - A, \gamma \big)^{-1} (Ef, x)$ of (\ref{eqn:MaxRegEqn}) on $J = [0,T]$.
	From this estimate, we conclude that inequality (\ref{I1}) holds for any $T > 0$ and
	so Claim 1 is true for $\omega_1 = 0$ in this case.
	Furthermore, setting
	\begin{equation*}
	U_{\mu}x := 2 {\rm Re}\, \mu \int_0^\infty e^{- \mu t} u_{\bar{\mu}}(t,x) \, dt
	\qquad \text{and} \qquad	V_\mu :=0,
	\end{equation*}
	we see that Claim 2 holds for $\omega_2 = 0$.	Therefore, it follows that
	\begin{equation*}
	{\mathbb C}_+\subset\rho(A)\quad\text{and}\quad \| (\mu-A)^{-1} \|_{\mathcal{B}(E_0)}
 	\leq \frac{c}{1 + | \mu |},\quad {\rm Re}\,\mu >0\,.
	\end{equation*}
	Since this estimate is uniform in ${\rm Re}\,\mu >0$, we conclude that
	the imaginary axis must be contained in the resolvent set of $A$
	and the estimate continues to hold for ${\rm Re}\,\mu\ge 0$.
	This implies that $\sigma(A)\subset[{\rm Re}\,\mu<0]$ and $s(A)<0$ as claimed.
\end{itemize}
\vspace{-5mm}
\end{proof}

\begin{remark}
{\bf (a)}
If the underlying Banach spaces $E_0$ and $E_1$ are real, we can apply
Theorem \ref{MainResult} to the complexification of $E_0$, $E_1$ and $A$. Then,
by restriction back to the real case we derive results in the original, real-valued, setting.
Hence, we lose no generality here by focusing on only the complex case.\\
\goodbreak
\noindent
{\bf (b)}
Suppose  $(C(J,E_0), C^1(J,E_0) \cap C(J,E_1))$ is a pair of maximal regularity
for some 
$A \in \mathcal{B}(E_1, E_0)$.
Then by a result of Baillon \cite{Ba80} (see also \cite{EG92})
either $E_{1}=E_{0}$,
or $E_{0}$ contains a closed subspace which is isomorphic to $c_{0}$, the space of null sequences.
In particular, if $E_{0}$ is reflexive or weakly sequentially compact then
continuous maximal regularity cannot occur.\\
\goodbreak
\noindent
{\bf (c)}
If one already knows that $A$ generates a $C_0$-semigroup on $E_0$,
then DaPrato and Grisvard proved that continuous maximal regularity
implies that $A$ is the generator of an analytic semigroup,
see also \cite[Proposition III.3.1.1]{Amann95}.
%
%
%
%
\\
\goodbreak
\noindent
{\bf (d)}
For the reader's convenience we briefly describe a situation which shows that many
interesting operators $A$ give rise to continuous maximal regularity.
Assume that $A\in\mathcal{H}(E_{1},E_{0})$ (i.e.
$-A$ generates a strongly continuous analytic semigroup on $E_0$ with $D(A) = E_1$) and define
\begin{equation*}
\begin{split}
&E_{2}:=E_{2}(A):=( D(A^{2}),\|\cdot\, \|_{E_{2}}), \\
&\|\cdot \|_{E_{2}}:= \|\cdot \|_{E_{2}(A)}:=\|A\cdot \|_{E_{1}} +
\|\cdot \|_{E_{1}}.
\end{split}
\end{equation*}
Then $(E_{2},\|\cdot\|_{E_{2}})$ is a Banach space, and $E_2$ is densely embedded in $E_1$.
We set
\begin{equation*}
\begin{split}
&E_{\theta}:=(E_{0},E_{1})_{\theta}, \\
&E_{1+\theta}:=E_{1+\theta}(A):=(E_{1},E_{2}(A))_{\theta},\qquad
\theta \in (0,1), \\ &A_{\theta}:=\text{the maximal
$E_{\theta}$-realization of $A$},
\end{split}
\end{equation*}
where $(\cdot,\cdot)_\theta$ denotes the continuous interpolation method
of Da Prato and Grisvard \cite{DPG79},
see also \cite{Amann95, Ang90, CleSim01, Lun95}.
It is then well-known that  $A_{\theta}\in
\mathcal{H}(E_{\theta},E_{1+\theta})$, and it turns out that
$A_{\theta}$ gives rise to continuous maximal regularity.
\medskip\\
{\bf Theorem.} {\rm (Da Prato-Grisvard)}
Suppose that $\theta\in (0,1)$, $T > 0$. Let $J = [0,T]$. Then
$ \displaystyle
(\mathbb{E}_0(J),\mathbb{E}_1(J)):=
(C(J,E_{\theta}),
C^{1}(J,E_{\theta})\cap C(J,E_{1+\theta}))
$
is a pair of maximal regularity for $A_{\theta}$. 
\end{remark}
\medskip
\begin{remark}
{\bf (a) Maximal Regularity With Prescribed Singularity}

\noindent
We demonstrate that Theorem~\ref{MainResult} continues to hold in a more general setting,
namely a space of continuous functions which allow for singularities at zero.
These spaces have for instance been studied in \cite{Amann95, Ang90, CleSim01}.
This setting is well adapted for the study of quasilinear parabolic equations,
see for instance \cite {CleSim01, EMS98}.

\bigskip
\noindent
Assume that $\sigma \in (0,1)$, $E$ is a Banach space over
$\mathbb{K} \,\big( = \mathbb{R} \; \text{or} \; \mathbb{C} \big)$, $J=[0,T]$
for some $T>0$, and we set $\dot J:=J\setminus\{0\}$.
Then define
\begin{equation*}
\begin{aligned}
BU\!C_{1- \sigma}(J,E):=\!\big\{u\in C(\dot J,E):
[t\mapsto t^{1- \sigma}u(t)]\in BU\!C(\dot J,E),
\lim_{t\to 0^{+}}t^{1- \sigma}\|u(t)\|=0\big\},
\end{aligned}
\end{equation*}
which is a Banach space when equipped with the norm
\begin{equation*}
\|u\|_{C_{1-\sigma}}:=\sup_{t\in \dot J}t^{1-\sigma}\|u(t)\|_{E},
\quad u\in BU\!C_{1-\sigma}(J,E), \quad \sigma\in (0,1).
\end{equation*}
Further we introduce the subspace
\begin{equation*}
BU\!C^{1}_{1-\sigma}(J,E):=\{u\in C^{1}(\dot J,E)
: u,\dot{u} \in BU\!C_{1-\sigma}(J,E)\}.
\end{equation*}
Now, given a pair of densely embedded Banach spaces $(E_0,E_1)$
and $\sigma \in (0,1)$, we consider the pair of associated function spaces
\begin{equation}
\label{2.1}
\begin{aligned}
\mathbb{E}_{0}(J):&=BU\!C_{1-\sigma}(J,E_{0}),\\
\mathbb{E}_{1}(J):&=BU\!C^{1}_{1-\sigma}(J,E_{0})\cap BU\!C_{1-\sigma}(J,E_{1}),
\end{aligned}
\end{equation}
where $\mathbb{E}_{1}(J)$ is a Banach space with the norm
\begin{equation*}
\|u\|_{\mathbb{E}_{1}(J)}:=
\sup_{t\in\dot J}t^{1-\sigma}(\| \dot{u}(t)\|_{E_{0}}+\|u(t)\|_{E_{1}}).
\end{equation*}
It is well-known that the trace space is then given by
$\gamma{\mathbb E}_1=(E_0,E_1)_\sigma$, $\sigma\in (0,1),$
where $(\cdot,\cdot)_\sigma$ denotes the continuous interpolation method,
see \cite[Section 1.2.2] {Lun95} for instance.


\begin{corollary}
Let $(E_0, E_1)$ be a pair of densely embedded Banach spaces, $J = [0,T]$ with $T > 0$,
and $A\in\mathcal{B}(E_1,E_0)$ a closed operator on $E_0$. If 
\[
\big( \mathbb{E}_0(J),\mathbb{E}_1(J) \big) = \left( BU \! C_{1 - \sigma}(J, E_0), BU \! C_{1 - \sigma}^1(J, E_0) \cap BU \! C_{1 - \sigma}(J, E_1) \right)
\]
is a pair of maximal regularity for $A$, for some $\sigma \in (0,1]$, then $A$ generates a strongly continuous
analytic semigroup on $E_0$.
\end{corollary}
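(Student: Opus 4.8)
The plan is to replay the bounded-interval portion of the proof of Theorem~\ref{MainResult}, replacing the spaces $BU\!C(J,E_i)$ by their weighted analogues $BU\!C_{1-\sigma}(J,E_i)$ and bookkeeping the fact that the trace space is now $\gamma\mathbb{E}_1=(E_0,E_1)_\sigma$ rather than $E_1$. When $\sigma=1$ the weight $t^{1-\sigma}\equiv 1$ and the statement \emph{is} Theorem~\ref{MainResult}, so we may assume $\sigma\in(0,1)$. As in that proof, it suffices to produce $\omega\in\mathbb{R}$ and $N\ge 1$ with $\{\mathrm{Re}\,\mu>\omega\}\subset\rho(A)$ and $\|(\mu-A)^{-1}\|_{\mathcal{B}(E_0)}\le N/(1+|\mu|)$ there, and then invoke \cite[Theorem 1.2.2]{Amann95} exactly as before.

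For the a priori estimate (the analogue of Claim~1), fix $x\in E_1$, $\mu\in\mathbb{C}$, and set again $v_\mu(t):=e^{\mu t}x$. The singular weight is harmless on a bounded interval: since $t^{1-\sigma}$ is bounded on $[0,T]$ and tends to $0$ as $t\to 0^+$, one checks directly that $v_\mu\in\mathbb{E}_1(J)$, that $g_\mu(t):=e^{\mu t}(\mu-A)x\in\mathbb{E}_0(J)$, and that $v_\mu$ is the maximal-regularity solution of \eqref{eqn:MaxRegEqn} with datum $(g_\mu,x)$. Hence $\|v_\mu\|_{\mathbb{E}_1(J)}\le M\big(\|g_\mu\|_{\mathbb{E}_0(J)}+\|x\|_{\gamma\mathbb{E}_1}\big)$; and for $\mathrm{Re}\,\mu\ge 0$ the weighted sups are attained at $t=T$, giving $\sup_{t\in\dot J}t^{1-\sigma}e^{\mathrm{Re}\,\mu\,t}=T^{1-\sigma}e^{\mathrm{Re}\,\mu\,T}$ on both sides. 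This common factor multiplies every term except $\|x\|_{\gamma\mathbb{E}_1}$, which we bound by $c\|x\|_1$ using the embedding $E_1\hookrightarrow(E_0,E_1)_\sigma=\gamma\mathbb{E}_1$ and then absorb by choosing $\omega_1$ with $T^{1-\sigma}e^{\omega_1 T}\ge 2Mc$; the outcome is $\|x\|_1+|\mu|\|x\|_0\le 2M\|(\mu-A)x\|_0$ for $\mathrm{Re}\,\mu>\omega_1$, in particular injectivity of $\mu-A$ there.

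For surjectivity (the analogue of Claim~2), put $f_\mu(t):=e^{-\mu t}x$ for $x\in E_0$ and $\mu\in\mathbb{C}_+$; then $f_\mu\in\mathbb{E}_0(J)$ and, since $t^{1-\sigma}e^{-\mathrm{Re}\,\mu\,t}\le T^{1-\sigma}$ for $\mathrm{Re}\,\mu\ge 0$, we get $\|f_\mu\|_{\mathbb{E}_0(J)}\le T^{1-\sigma}\|x\|_0$ uniformly in $\mu$. With $u_\mu:=K_Af_\mu$, continuity of $K_A$ together with $E_1\hookrightarrow E_0$ yields $\|u_{\bar\mu}(T,x)\|_0\le c_1 T^{-(1-\sigma)}\|u_{\bar\mu}\|_{\mathbb{E}_1(J)}\le C\|x\|_0$ with $C$ independent of $\mu$ — precisely the bound needed to run the rest of the argument verbatim. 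Defining $U_\mu$ and $V_\mu$ by the same formulas \eqref{eqn:BigUmu} over $[0,T]$, one has $\|V_\mu\|_{\mathcal{B}(E_0)}<\tfrac12$ for $\mathrm{Re}\,\mu$ large, the identity $(\mu-A)U_\mu x=(1-e^{-2\mathrm{Re}\,\mu\,T})(I-V_\mu)x$ holds (closedness of $A$ and $\gamma u_{\bar\mu}=0$), and a Neumann series shows $(\mu-A)U_\mu$, hence $\mu-A$, is onto for $\mathrm{Re}\,\mu>\omega_2$. Taking $\omega\ge\omega_1\vee\omega_2$ gives $\{\mathrm{Re}\,\mu>\omega\}\subset\rho(A)$, and the a priori estimate then yields $\|(\mu-A)^{-1}\|_{\mathcal{B}(E_0)}\le N/(1+|\mu|)$ there, completing the proof.

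I do not expect a genuine obstacle: the whole content is the observation that on a bounded interval the factor $t^{1-\sigma}$ is innocuous — it is bounded, it decays at $0$ (which only helps), and in the sups above it enters symmetrically and cancels — so the two test-function computations carry over unchanged, while the only structural novelty, that $\gamma\mathbb{E}_1=(E_0,E_1)_\sigma$ rather than $E_1$, is absorbed by the trivial embedding $E_1\hookrightarrow(E_0,E_1)_\sigma$ in the single place (the $\|x\|_{\gamma\mathbb{E}_1}$ term of Claim~1) where it is needed. The only care points are to verify membership of $v_\mu$ and $f_\mu$ in the weighted spaces and to record the elementary bounds on $\sup_{t\in\dot J}t^{1-\sigma}e^{\pm\mathrm{Re}\,\mu\,t}$ used above.
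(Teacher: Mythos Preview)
Your proposal is correct and follows essentially the same route as the paper: adapt Claims~1 and~2 of Theorem~\ref{MainResult} to the weighted spaces, absorb the trace-space change via the embedding $E_1\hookrightarrow(E_0,E_1)_\sigma=\gamma\mathbb{E}_1$, and observe that the bound on $\|u_{\bar\mu}(T,x)\|_0$ (which is all that is needed for $V_\mu$) survives the weight. The paper's proof is slightly terser but records exactly the same modifications you identify.
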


\begin{proof}
The methods presented in the proof of Theorem~\ref{MainResult}
apply in this setting with minor modifications. We provide an outline of the
proof in this case in order to illuminate the necessary modifications.
For Claim 1, fix $x \in E_1$
and define $v_{\mu} \in \mathbb{E}_1(J)$,
$g_{\mu}\in \mathbb{E}_0(J)$ as before.
Using the continuous embedding $E_1\hookrightarrow \gamma{\mathbb E}_1$ we derive as above the inequality
\begin{equation*}
\sup_{t \in J} t^{1 - \sigma} e^{{\rm Re} \, \mu t} \big( |\mu| \| x \|_0 + \| x \|_1 \big)
\le M  \big( \sup_{t \in J} t^{1 - \sigma}e^{{\rm Re} \, \mu t} \| (\mu - A) x \|_0 + \| x \|_1 \big),
\end{equation*}
which is the analog of (\ref{I1}) in this setting.
Choosing $\omega_1 $ so that
$ 2M \leq \sup_{t \in J} t^{1 - \sigma} e^{\omega_1 t},$ we see that
(\ref{I2}) holds as before.

Meanwhile, for Claim 2 we consider $f_{\mu}, \, u_{\mu}, \, U_{\mu}$
and $V_{\mu}$ defined as above, for $\mu \in \mathbb{C}_+$.
Recalling the continuity and embedding constants $c_1, \, c_2$, we see, for $t \in \dot{J}$,
\begin{equation*}
t^{1 - \sigma}\| u_{\bar{\mu}}(t) \|_0
\le  \| u_{\bar{\mu}} \|_{\mathbb{E}_0(J)}
\le  c_1 \| u_{\bar{\mu}} \|_{\mathbb{E}_1(J)}
\le c_2 \| f_{\bar{\mu}} \|_{\mathbb{E}_0(J)}
\le c_2 T^{1-\sigma}\| x \|_0.
\end{equation*}
This shows that (\ref{uMuBound}) holds for $t = T$ and the remainder of the proof
follows exactly as in Theorem~\ref{MainResult}. \qedhere

\end{proof}

\vspace{2mm}
\noindent
{\bfseries (b) $L_p$ Maximal Regularity}

\noindent
For a given pair of densely embedded Banach spaces $(E_0,E_1)$ one defines
the pair of functions spaces
\begin{equation*}
\begin{split}
{\mathbb E}_1(J):&=W^1_p(J,E_0)\cap L_p(J,E_1), \\
{\mathbb E}_0(J):&=L_p(J,E_0).
\end{split}
\end{equation*}
It is well-known that the trace space is then given by
$\gamma{\mathbb E}_1=(E_0,E_1)_{1-1/p,p}$.

An inspection of the proof of Theorem~\ref{MainResult} shows that the same
methods apply to this $L_p$-maximal regularity setting, with minor modifications.
Moreover, the methods presented herein
considerably simplify the arguments in \cite{Dore93,Pruss02}.
\end{remark}


\section*{Acknowledgments}
We thank the anonymous reviewer for valuable suggestions.
We would also like to thank Jan Pr\"uss and Mathias Wilke for helpful conversations.
\bigskip


\medskip
Received July 2010; revised March 2011.
\medskip

\end{document}